\newtheorem{theorem}{Theorem}
\newtheorem{lemma}{Lemma}
\newtheorem{corollary}{Corollary}
\newcommand{\given}[2]{\left(#1\parallel #2\right)}
\newcommand{\ip}[2]{\left\langle #1\,,\,#2\right\rangle} 
\DeclareMathOperator*{\argmax}{arg\,max}
\DeclareMathOperator{\one}{\mathds{1}_m}
\begin{document}

\title{Compatible Weighted Proper Scoring Rules\thanks{This is a pre-copyedited, author-produced PDF of an article accepted for publication in Biometrika following peer review. The definitive publisher-authenticated version (Biometrika 99 (4): 989-994, 2012) is available online at \protect\url{http://biomet.oxfordjournals.org/cgi/content/abstract/ass046?ijkey=CaRYBhLvVa4XvRY&keytype=ref}.} }

\author{Peter G. M. Forbes}
\affil{Department of Statistics, University of Oxford\\ 1 South Parks Road, Oxford OX1 3TG, U.K.}
\date{September 19, 2012}

\maketitle 

\begin{abstract} 
Many proper scoring rules such as the Brier and log scoring rules implicitly reward a probability forecaster relative to a uniform baseline distribution.  Recent work has motivated weighted proper scoring rules, which have an additional baseline parameter.  To date two families of weighted proper scoring rules have been introduced, the weighted power and pseudospherical scoring families.  These families are compatible with the log scoring rule: when the baseline maximizes the log scoring rule over some set of distributions, the baseline also maximizes the weighted power and pseudospherical scoring rules over the same set.  We characterize all weighted proper scoring families and prove a general property: every proper scoring rule is compatible with some weighted scoring family, and every weighted scoring family is compatible with some proper scoring rule.
\end{abstract}

\section{Introduction}
Suppose $Y$ is a random variable taking values in $\{1,\ldots,m\}$.  The valid distributions for $Y$ are 
\begin{equation*}
\mathcal P=\{(p_1,\ldots,p_m)^T : 0\leq p_i\leq 1, \sum_{i=1}^m p_i=1\}\subset \mathbb R^m.
\end{equation*} 
A {scoring rule} $s: \mathcal P\times\mathcal P\rightarrow \mathbb R$ is a function linear in its second argument.  The scoring rule is {proper} if $s(p,r)$ is maximized over $p$ at $p=r$, and strictly proper if this maximum is unique.

Consider a forecaster asked to issue a probabilistic prediction $p$ for $Y$.  She is motivated by a reward of $s(p,r)$ upon observing outcome distribution $r$.  If the forecaster's true belief is $p^*$, her expected score $s(p,p^*)$ is maximized when she predicts $p=p^*$.  Hence proper scoring rules encourage honesty.

Two scoring rules {equivalent} if their rewards are linearly related for all $p,r\in\mathcal P$:
\begin{equation}\label{equivalence}
s_1(p,r) = a\{s_2(p,r) + \ip{b}{r}\}
\end{equation}
where $\ip{\cdot}{\cdot}$ is the standard inner product on $\mathbb R^m$, $a>0$ and $b\in\mathbb R^m$.

The main characterization theorem for proper scoring rules was stated by \citet{mccarthy} and proved by \citet{hendrickson}.
\begin{theorem}
A scoring rule $s$ is proper if and only if the function 
\begin{equation}\label{s2Snew}
S(\lambda p)=\lambda s(p,p) 
\end{equation}
defined on $\mathcal P_{\Lambda} = \{\lambda p : \lambda >0, p\in\mathcal P\}$ is convex and satisfies $S(p)\geq s(p, q)$ for all $p,q\in\mathcal P$.  The scoring rule is strictly proper if and only if $S$ is strictly convex on $\mathcal P$.
\end{theorem}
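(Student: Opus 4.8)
The plan is to use linearity in the second argument to collapse everything onto a single vector-valued map and then read the stated conditions as a supporting-hyperplane description of $S$. Since $s(p,\cdot)$ is linear, I would write $s(p,r)=\ip{g(p)}{r}$, where $g(p)=(s(p,e_1),\ldots,s(p,e_m))^T$ and $e_i\in\mathcal P$ is the distribution placing all mass on outcome $i$. By construction $S(\lambda p)=\lambda\ip{g(p)}{p}=\ip{g(p)}{\lambda p}$, so $S$ is positively homogeneous of degree one and $S(p)=\ip{g(p)}{p}$ on $\mathcal P$. In this notation properness reads $s(r,r)\geq s(p,r)$, that is $\ip{g(p)}{r}\leq S(r)$ for all $p,r\in\mathcal P$ with equality at $p=r$, which is exactly the support inequality of the statement (in the second slot). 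The whole task is therefore to see that this one family of inequalities is equivalent to convexity of $S$ together with that support inequality.

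First I would dispatch the non-strict case. The support inequality says precisely that $S(r)=\max_{p\in\mathcal P}\ip{g(p)}{r}$, with the maximum attained at $p=r$. For $x=\lambda p_0\in\mathcal P_\Lambda$, with $\lambda=\sum_i x_i$ and $p_0=x/\lambda$, homogeneity and the simplex inequality give $\ip{g(p)}{x}=\lambda\ip{g(p)}{p_0}=\lambda\,s(p,p_0)\leq\lambda S(p_0)=S(x)$, with equality at $p=p_0$; hence $S(x)=\max_{p\in\mathcal P}\ip{g(p)}{x}$ on the whole cone, a pointwise maximum of linear functions, and therefore convex. This is the forward implication. The reverse is immediate, since the support inequality alone yields $s(p,r)\leq S(r)=s(r,r)$, i.e.\ properness; convexity is not even needed for that direction. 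So $s$ is proper if and only if $S$ is convex on $\mathcal P_\Lambda$ and the support inequality holds.

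For the strict case I would restrict to $\mathcal P$, first noting why: $S$ is linear along each ray, so it is never strictly convex on $\mathcal P_\Lambda$, which is exactly why the theorem confines strict convexity to the simplex. Strict properness means the maximizer $p=r$ is unique, i.e.\ $\ip{g(p)}{r}<S(r)$ whenever $p\neq r$; relabelling, this says that for every $q$ the supporting functional $\ip{g(q)}{\cdot}$ meets $S$ on $\mathcal P$ only at $q$. I would then close the loop with a segment argument. If $S$ is strictly convex on $\mathcal P$ and $\ip{g(q)}{r}=S(r)$ for some $r\neq q$, the affine function $\ip{g(q)}{\cdot}$ agrees with $S$ at both endpoints of $[q,r]$, so at the midpoint $m$ it equals $\tfrac12\{S(q)+S(r)\}>S(m)$, contradicting $\ip{g(q)}{m}\leq S(m)$; conversely, if strict convexity fails there are distinct $r_1,r_2\in\mathcal P$ whose midpoint $r_0$ satisfies $S(r_0)=\tfrac12\{S(r_1)+S(r_2)\}$, and the supporting functional at $r_0$ is then forced to agree with $S$ at $r_1\neq r_0$, destroying uniqueness. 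This gives strict properness if and only if $S$ is strictly convex on $\mathcal P$.

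The routine parts are the bookkeeping of the homogeneous extension and the inner-product rewriting. The one step needing genuine care is the strict case: both the observation that homogeneity rules out strict convexity on the cone, forcing the restriction to $\mathcal P$, and the midpoint argument translating ``unique maximizer'' into ``no affine minorant touches $S$ twice.'' I expect that equivalence, rather than the convexity half, to be the crux.
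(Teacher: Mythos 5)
Your proof is correct. Note that the paper itself gives no proof of this theorem: it is quoted as stated by McCarthy (1956) and proved by Hendrickson and Buehler (1971), so the only comparison available is with that classical argument --- and yours is essentially it. You represent $s(p,r)=\ip{g(p)}{r}$ via the vertex values, identify $S$ on the cone $\mathcal P_\Lambda$ as the pointwise maximum of the linear functionals $\ip{g(p)}{\cdot}$ (whence convexity and positive homogeneity), and settle strictness by the supporting-functional/midpoint argument on the simplex; your observation that homogeneity rules out strict convexity on $\mathcal P_\Lambda$, forcing the restriction to $\mathcal P$, is exactly why the theorem is phrased that way. Two remarks. First, you silently (and correctly) read the displayed inequality as the support inequality $S(q)\geq s(p,q)$; as literally printed, $S(p)\geq s(p,q)$ would say $s(p,p)\geq s(p,q)$, which fails even for the log score (take $m=2$, $p=(0.9,0.1)$, $q=(0.99,0.01)$), so the paper's display has an index slip that your reading repairs. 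Your further remark that convexity is redundant in the ``if'' direction --- it follows from the support inequality via the max representation --- is also accurate. Second, a small presentational caveat: in the strict case your ``if'' direction invokes $\ip{g(q)}{m}\leq S(m)$, i.e.\ the support inequality, so that implication is conditional on the hypotheses of the first part; this matches the intended reading of the theorem (strict convexity is added on top of the proper case), but it is worth stating explicitly.
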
 

The function $S$ is called the {optimal expected score}.  \citet{entropy} showed that the negative optimal expected score can be interpreted as a generalized entropy.

When $S$ is differentiable we have \citep{hendrickson}
\begin{equation}\label{S2snew}
s(p,r) = s(\lambda p, r) = \ip{\nabla_{\lambda p} S(\lambda p)}{r}
\end{equation}
which associates a proper scoring rule with any convex differentiable function $S$.  For the rest of this paper we assume that $S$ is twice differentiable on $\mathcal P_\Lambda$, strictly convex on $\mathcal P$, and achieves its unique minimum in $\mathcal P_+$, the interior of $\mathcal P$.

Equation \eqref{S2snew} extends the domain of $s$ to $\mathcal P_\Lambda\times\mathcal P$ and allows us to differentiate $s$ with respect to its first parameter. Since $s(\lambda p, r)=s(p,r)$ for any $\lambda>0$, we have $\ip{\nabla_p s(p,r)}{\one}=0$ for any $p$ and $r\in\mathcal P$, where $\one\in\mathbb R^m$ has all entries equal to one.

Consider a sequence of observations $y_1,\ldots,y_n$ with empirical  distribution $r\in\mathcal P$.  Let $p(\theta)$ be some model which takes values in $\mathcal P_+$ and is differentiable over some open convex set $\Theta$.  Then any scoring rule defines an {optimal score estimator} \citep{gneiting} via
\begin{equation*}
\tilde\theta(r) = \argmax_{\theta\in\Theta} s\{p(\theta),r\}=\argmax_{\theta\in\Theta} \sum_{i=1}^n s\{p(\theta),y_i\}.
\end{equation*}
From \eqref{equivalence}, all equivalent scoring rules have the same optimal score estimator. The optimal score estimator is {well behaved at $r$} if $\tilde\theta(r)$ exists and is the unique root of $\nabla_\theta s\{p(\theta),r\}$ in $\Theta$. When $s$ is the log scoring rule $s(p,r)=\sum_{i=1}^m r_i\log p_i$, the optimal score estimator becomes the maximum likelihood estimator.

A well behaved optimal score estimate $\tilde\theta(r)$ yields the parameter choice that maximizes the forecaster's expected score under the assumption that the future is similar to the past.  Specifically we suppose that our forecaster issues the prediction $p(\theta)$ for some $\theta\in\Theta$.  If she believes that the next observation's distribution is $r$, then $p\{\tilde\theta(r)\}$ maximizes her expected score.

The optimal score estimator can be generalized so that each $y_i$ follows a different probability distribution, as long as these distributions share a common parameter $\theta\in\Theta$.  Thus the optimal score estimator is applicable to regression models that depend on both $\theta$ and some additional covariates. For the sake of brevity we consider only the basic optimal score estimator here, though all the results hold in the general case.

\section{Results}

We define the {baseline} of a strictly proper scoring rule to be the unique $q\in\mathcal P_+$ that maximizes the generalized entropy $-S(p)$.  For example, the log scoring rule's generalized entropy is the Shannon entropy, which is maximized by the uniform distribution. Proper scoring rules tends to give larger rewards for riskier predictions which vary significantly from the baseline.  Given $q\in\mathcal P_+$ and a strictly proper scoring rule $s(p,r)$, there is an equivalent rule with baseline $q$ given by $s(p,r) - s(q,r)$.

A {weighted scoring family} 
\begin{equation*}
s\given{p,r}{\cdot}=\{s\given{p,r}{q} : q\in\mathcal P_+\}
\end{equation*}
is a family of strictly proper scoring rules where each member $s\given{p,r}{q}$ has baseline $q$.  Two weighted proper scoring rules are equivalent if \eqref{equivalence} is satisfied, where now $a$ and $b$ are functions of $q$.  Different members from the same family need not be equivalent.

Weighted scoring families allow us to tailor our scoring rule to the problem at hand, as motivated in \citet{Jose2009} and \citet{scoreMLE}.  This tailoring is achieved by modifying the baseline.  The baseline is easily interpretable and justifiable in many real world situations.  For instance, weighted scoring families are used in \citet{VictorRichmondR} for a optimal portfolio allocation problem, where the baseline corresponds to the market price.

Let $s(p,r)$ be a proper scoring rule and $s\given{p,r}{\cdot}$ be a weighted scoring family.  We say $s\given{p,r}{\cdot}$ is {compatible with} $s(p,r)$ if for any $q$ and $r\in\mathcal P_+$,
\begin{equation}\label{condfordominated}
\left.\nabla_p s(p,r)\right|_{p=q} = \left.a(q)\nabla_p s\given{p,r}{q}\right|_{p=q}
\end{equation}
for some function $a(q)>0$.  In words, equation \eqref{condfordominated} says that the tangent of a weighted scoring rule at its baseline $q$ is parallel to the compatible scoring rule's tangent at $q$.  By approximating $s\given{p,r}{q}$ with its tangent at $p=q$ and applying \eqref{condfordominated}, we obtain
\begin{equation*}
s\given{p,r}{q} \approx s\given{q,r}{q} + \frac{1}{a(q)}\ip{\left.\nabla_p s(p,r)\right|_{p=q}}{p-q}.
\end{equation*}
The first term corresponds to an equivalence factor $\ip{b(q)}{r}$.  Thus, up to equivalence, every member of the weighted scoring family $s\given{p,r}{\cdot}$ is linearly approximated by the compatible proper scoring rule $s(p,r)$ in the vicinity of its baseline.

\begin{theorem}
Any proper scoring rule is compatible with at least one weighted scoring family.  Conversely, every weighted scoring family is compatible with some proper scoring rule, which is unique up to equivalence.
\end{theorem}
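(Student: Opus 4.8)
The plan is to handle the two directions separately, reducing compatibility in each case to a single pointwise identity between Hessians of the underlying optimal expected scores. Throughout I write $s(p,r)=\ip{\nabla S(p)}{r}$ as in \eqref{S2snew}, so that $\left.\nabla_p s(p,r)\right|_{p=q}=\nabla^2 S(q)\,r$.

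For the forward direction I would simply exhibit the family $s\given{p,r}{q}:=s(p,r)-s(q,r)$. By the remark preceding \eqref{condfordominated}, each such rule is strictly proper with baseline $q$, so this is a genuine weighted scoring family. Since $s(q,r)$ does not depend on the first argument, $\left.\nabla_p s\given{p,r}{q}\right|_{p=q}=\left.\nabla_p s(p,r)\right|_{p=q}$, and \eqref{condfordominated} holds with $a(q)\equiv 1$. For the converse the first task is to recast compatibility as linear algebra. Writing $S_q$ for the optimal expected score of the member $s\given{\cdot,\cdot}{q}$, differentiation gives $\left.\nabla_p s\given{p,r}{q}\right|_{p=q}=G(q)\,r$ with $G(q)=\nabla^2 S_q(q)$ symmetric. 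Because $r$ ranges over all of $\mathcal P_+$, whose affine span is $\{r:\ip{r}{\one}=1\}$, taking differences of two such $r$ shows that \eqref{condfordominated} forces $\nabla^2 S(q)$ and $a(q)G(q)$ to agree on the tangent space $\one^{\perp}$; since the degree-one homogeneity of $S$ and of $S_q$ makes both matrices annihilate $q$, and $\one^{\perp}\oplus\mathbb R q=\mathbb R^m$, compatibility is equivalent to the pointwise identity $\nabla^2 S(q)=a(q)G(q)$ on $\mathcal P_+$ for some $a(q)>0$.

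Granting this reduction, uniqueness follows cleanly. If $S$ and $\tilde S$ are both compatible then $\nabla^2 S=c\,\nabla^2\tilde S$ for a positive function $c$. Equating the mixed third partials $\partial_k(\nabla^2 S)_{ij}=\partial_i(\nabla^2 S)_{kj}$ and using that the third partials of $\tilde S$ are symmetric, the terms $c\,\partial(\nabla^2\tilde S)$ cancel and one is left with $(\partial_k c)(\nabla^2\tilde S)_{ij}=(\partial_i c)(\nabla^2\tilde S)_{kj}$ for all $i,j,k$. Fixing $j$, this says the rank-one matrix $\nabla c\,(\nabla^2\tilde S\,e_j)^{T}$ is symmetric, hence $\nabla c$ is parallel to every column of $\nabla^2\tilde S$; as $\tilde S$ is strictly convex its Hessian is nondegenerate on $\one^{\perp}$, so two of its columns are independent and $\nabla c=0$. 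Thus $c$ is constant and $S-c\tilde S$ is affine, which is exactly equivalence \eqref{equivalence}.

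The remaining and, I expect, hardest step is existence of a compatible rule for a prescribed family, namely producing $a(q)>0$ for which $a(q)G(q)$ is an honest Hessian field. Setting $\psi=\nabla\log a$, symmetry of the third partials of $\nabla^2 S=aG$ reduces to the linear system $\psi_k G_{ij}-\psi_i G_{kj}=\partial_i G_{kj}-\partial_k G_{ij}$, which I can solve pointwise for $\psi$ by contracting against the inverse of $G$ on $\one^{\perp}$; I would then recover $\log a$ by line integration and $S$ by integrating $aG$, with strict convexity immediate from $a>0$ and positive-definiteness of $G$. The crux is that $\psi$ so defined must be curl-free for $\log a$ to exist globally, and this is precisely where the structure of a weighted scoring family---that each $G(q)$ is the Hessian of a bona fide optimal expected score evaluated at its own minimizer---must be used. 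I would verify the curl-free condition by differentiating the defining relations of the $S_q$ and feeding the resulting symmetries back into the system; establishing this integrability is the main obstacle, and it is what ultimately pins the compatible rule down to the single equivalence class found above.
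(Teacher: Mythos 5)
Your forward direction and your reduction of the converse to a pointwise Hessian identity are exactly the paper's proof: the paper exhibits the same family $s\given{p,r}{q}=s(p,r)-s(q,r)$, and its equation \eqref{Shessian} is precisely your identity $\nabla^2 S(q)=a(q)G(q)$. Your derivation of that identity (differences of $r$ spanning the tangent space, plus both Hessians annihilating $q$ by degree-one homogeneity) is carried out more carefully than in the paper, which asserts \eqref{Shessian} directly. Your uniqueness argument is likewise more explicit than the paper's one-line appeal to ``uniqueness of a second-order differential equation up to a linear term''; in particular the paper's one-liner does not address the unknown factor $a(q)$ relating two candidate solutions, and your third-partials computation showing $\nabla c=0$ fills exactly that hole. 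Two caveats there: \eqref{Shessian} is posited only at points of the simplex $\mathcal P_+$, so before equating mixed third partials in all $m$ coordinate directions you must extend the identity to the open cone $\mathcal P_\Lambda$ using the degree $-1$ homogeneity of the Hessians (routine, but it should be said); and your step ``two of its columns are independent'' needs the Hessian to have rank at least two, i.e.\ $m\geq 3$, since for $m=2$ the Hessian of a degree-one homogeneous function has rank one.

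The genuine gap is the existence half of the converse, and you say so yourself. To prove that every weighted scoring family admits a compatible rule you must actually produce $a(q)>0$ making $a(q)G(q)$ a Hessian field. Your plan --- solve $\psi_k G_{ij}-\psi_i G_{kj}=\partial_i G_{kj}-\partial_k G_{ij}$ for $\psi=\nabla\log a$ by contracting against $G^{-1}$ --- yields only a candidate: the system is overdetermined (one vector unknown against order $m^3$ scalar equations), so even pointwise solvability, not merely the curl-free condition you flag, requires justification; and the closedness of $\psi$, which is where you correctly guess the baseline structure of the family must enter, is never established. Without it the converse is unproved, since for a generic smooth field of positive-definite matrices $G(q)$ no conformal factor $a$ exists, so the argument must use that $G(q)=\left.\nabla^2_p S\given{p}{q}\right|_{p=q}$ with baseline $q$. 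It is only fair to note that the paper's own proof does not close this gap either: it speaks of ``the $S$ satisfying \eqref{Shessian}'', taking solvability of the differential equation for granted and verifying only that any solution is convex, hence strictly proper. So your proposal tracks the paper's argument step for step, is more rigorous on the reduction and on uniqueness, and is incomplete at precisely the step the paper elides --- but as a standalone proof of the stated theorem it remains incomplete there, and you should either supply the integrability argument or restrict the claim accordingly.
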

\begin{proof}
Let $s(p,r)$ be a proper scoring rule.  From the definition \eqref{condfordominated}, it is compatible with the weighted scoring family where each member is equivalent to $s(p,r)$:
\begin{equation*}
s\given{p,r}{q} = s(p,r) - s(q,r).
\end{equation*}
Conversely, consider the weighted scoring family $s\given{p,r}{\cdot}$.  From \eqref{S2snew} and \eqref{condfordominated}, a proper scoring rule $s(p,r)$ is compatible with this family if and only if its optimal expected score $S(p)$ satisfies
\begin{equation}\label{Shessian}
\nabla^2_q S(q) = a(q)\left.\nabla^2_p S\given{p}{q}\right|_{p=q}
\end{equation}
for some $a(q)>0$ and all $q\in\mathcal P_+$.  The right hand side is a positive definite matrix since it is the Hessian of the convex function $S\given{p}{q}$.  Thus the $S$ satisfying \eqref{Shessian} is convex and corresponds to a strictly proper scoring rule.  This solution is unique up to equivalence since the solution of a second-order differential equation is unique up to a linear term.
\end{proof}

Having shown that a compatible proper scoring rule always exists, we now provide an alternative characterization for compatibility which has direct applications to optimal score estimation and decision theory.

\begin{lemma}\label{lemma1}
A weighted scoring family $s\given{p,r}{\cdot}$ is compatible with the proper scoring rule $s(p,r)$ if and only if $\left.\nabla_\theta s\{p(\theta),r\} \right|_{\theta=\theta_0}=0$ implies $\left.\nabla_\theta s\{p(\theta),r\parallel p(\theta_0\}\right|_{\theta=\theta_0}=0$
for all differentiable models $p(\theta)$, all $\theta_0\in\Theta$, and all $r\in\mathcal P_+$.
\end{lemma}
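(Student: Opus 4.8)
The plan is to recast both stationarity conditions as statements about the gradients $u = \left.\nabla_p s(p,r)\right|_{p=q}$ and $v = \left.\nabla_p s\given{p,r}{q}\right|_{p=q}$ at the common point $q = p(\theta_0)$, and then match them against the compatibility condition \eqref{condfordominated}. First I would apply the chain rule: letting $J$ denote the Jacobian $\partial p/\partial\theta$ evaluated at $\theta_0$, one gets $\left.\nabla_\theta s\{p(\theta),r\}\right|_{\theta=\theta_0} = J^{T} u$ and, holding the baseline $p(\theta_0)$ fixed when differentiating the second expression, $\left.\nabla_\theta s\given{p(\theta),r}{p(\theta_0)}\right|_{\theta=\theta_0} = J^{T} v$. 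Since $p(\theta)\in\mathcal P_+$ forces $\sum_i p_i(\theta)=1$, every column of $J$ lies in $\one^\perp$, and by the remark following \eqref{S2snew} both $u$ and $v$ lie in $\one^\perp$ as well.

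The forward implication is then immediate: if the family is compatible, \eqref{condfordominated} gives $u = a(q)v$ with $a(q)>0$, so $J^{T}u = a(q)\,J^{T}v$ and the two gradients vanish simultaneously for every model.

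For the converse I would exploit the freedom to choose the model. Given any $w\in\one^\perp$, the linear model $p(\theta)=q+\theta w$ is valid and differentiable on a small interval about $\theta_0=0$ (positivity persists for small $\theta$ because $q\in\mathcal P_+$), with $p(0)=q$ and single Jacobian column $w$. For it the hypothesis becomes $\ip{w}{u}=0$ and the conclusion $\ip{w}{v}=0$, so the lemma's condition forces $\ip{w}{u}=0\Rightarrow\ip{w}{v}=0$ for all $w\in\one^\perp$ and all $r\in\mathcal P_+$. As $u,v\in\one^\perp$, the functional $\ip{\cdot}{v}$ thus vanishes on the kernel of $\ip{\cdot}{u}$, which forces $v$ to be a scalar multiple of $u$.

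The remaining and, I expect, hardest step is to show this proportionality constant depends on $q$ alone and is positive. Here I would invoke the characterization theorem: writing $S$ and $S\given{\cdot}{q}$ for the two optimal expected scores and using \eqref{S2snew}, one has $u = G(r-q)$ and $v = H(r-q)$, where $G = \nabla^2 S(q)$ and $H = \left.\nabla^2_p S\given{p}{q}\right|_{p=q}$; here I have used the degree-one homogeneity of $S$ on $\mathcal P_\Lambda$, which gives $Gq=Hq=0$ and lets me replace $r$ by $r-q\in\one^\perp$. Both $G$ and $H$ are positive definite on $\one^\perp$. The parallelism $H(r-q)\parallel G(r-q)$, valid for all $r$ and hence for every vector of $\one^\perp$ by linearity, says every such vector is an eigenvector of $G^{-1}H$, which forces $G^{-1}H$ to be a scalar multiple of the identity; comparing the induced quadratic forms shows the scalar is positive. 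This yields $G = a(q)H$ with $a(q)>0$, which is exactly \eqref{Shessian} and hence \eqref{condfordominated}, establishing compatibility and completing the proof.
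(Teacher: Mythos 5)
Your proof is correct and follows essentially the same route as the paper: the chain rule with the Jacobian $J$ gives the forward direction, and the converse is obtained via one-parameter linear models $p(\theta)=q+\theta w$ through $q$ forcing the two gradients at $q$ to be parallel, followed by the Hessian representation of those gradients and linearity in $r$ to make the proportionality factor a positive function of $q$ alone. The differences are cosmetic---the paper tests a single cleverly chosen direction $b$ and invokes the Cauchy--Schwarz equality case where you test all $w\in\one^\perp$ and use an annihilator argument, and it concludes $a=a(q)$ directly from linearity in $r$ rather than via your eigenvector argument---while your explicit use of the homogeneity relations $Gq=Hq=0$ is, if anything, slightly more careful than the paper's treatment.
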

\begin{proof}
Choose some model $p(\theta)$ and $r\in\mathcal P_+$.  Suppose that $s\given{p,r}{\cdot}$ is compatible with $s(p,r)$, so that \eqref{condfordominated} holds for all $q\in\mathcal P_+$.  Then \eqref{condfordominated} certainly holds when $q=p(\theta_0)$ for any $\theta_0\in\Theta$.  Left multiplying both sides of \eqref{condfordominated} with the matrix $\left. \nabla_\theta p^T(\theta)\right|_{\theta=\theta_0}$ and using the chain rule,
\begin{equation*}
\left.\nabla_\theta s\{p(\theta),r\} \right|_{\theta=\theta_0}=a\{ p(\theta_0)\}\left.\nabla_\theta s\{p(\theta),r\parallel p(\theta_0)\}\right|_{\theta=\theta_0}.
\end{equation*}
Thus if $\left.\nabla_\theta s\{p(\theta),r\} \right|_{\theta=\theta_0}=0$ then $\left.\nabla_\theta s\{p(\theta),r\parallel p(\theta_0)\}\right|_{\theta=\theta_0}=0$.

Conversely, suppose $\left.\nabla_\theta s\{p(\theta),r\} \right|_{\theta=\theta_0}=0$ implies $\left.\nabla_\theta s\{p(\theta),r\parallel p(\theta_0)\}\right|_{\theta=\theta_0}=0$.  When $q=r$, both sides of \eqref{condfordominated} are being evaluated at their critical points and hence are zero.  We will show \eqref{condfordominated} holds for $q\neq r$ by showing that $v=\left.\nabla_p s\given{p,r}{q}\right|_{p=q}$ is parallel to $w=\left.\nabla_p s(p,r)\right|_{p=q}$.  Using \eqref{S2snew} we can rewrite $v$ as
\begin{equation}\label{veqn}
v=\left.\nabla^2_pS\given{p}{q}\right|_{p=q}r
\end{equation}
 where $\nabla^2_p S\given{p}{q}$ is the positive definite Hessian of $S\given{p}{q}$.  This implies $v\neq 0$ since $r\neq0$.  Furthermore since $v$ is a gradient of $s\given{p,r}{q}$, $\ip{v}{\one}=0$.  The same arguments show that $w\neq 0$ and $\ip{w}{\one}=0$.

Suppose $v$ is not parallel to $w$.  Then we can define the non-zero vector
\begin{equation}\label{beqn}
b=v-\frac{\ip{v}{w}}{\ip{w}{w}}w.
\end{equation}
By construction $\ip{b}{w}=0$.  Consider the model $p(\theta)=q+\theta b$ where $\theta$ takes values on $\Theta$, an open neighbourhood of zero small enough such that $\{p(\theta):\theta\in\Theta\}\subset\mathcal P_+$.  It follows from $\ip{v}{\one}=0$ and $\ip{w }{\one}=0$ that $p(\theta)$ is normalized for all $\theta\in\Theta$.  Thus $p(\theta)$ is a valid distribution for $\theta\in\Theta$ and, by our choice of $w$ and $p(\theta)$, 
  \begin{equation*}
  \left.\nabla_\theta s\{p(\theta),r\}\right|_{\theta=0}=\ip{\left.\nabla_\theta p(\theta)\right|_{\theta=0}}{w}=\ip{b}{w}=0.
  \end{equation*}
Hence by assumption, $\left.\nabla_\theta s\{p(\theta),r\parallel q\}\right|_{\theta=0}=0$.  By definition of $v$ we have
$\left.\nabla_\theta s\{p(\theta),r\parallel q\}\right|_{\theta=0}=\ip{b}{v}$
and thus $\ip{b}{v}=0$.  Substituting this into \eqref{beqn}, $\ip{v}{v}{\ip{w}{w}}=\ip{v}{w}^2$ and the Cauchy--Schwarz inequality implies that $v$ is parallel to $w$: $w=a(r,q)v$.  Using \eqref{veqn}, we rewrite $w=a(r,q)v$ as
\begin{equation*}
\left.\nabla_p^2 S\given{p}{q}\right|_{p=q}r = a(q,r)\left.\nabla_p^2 S(p)\right|_{p=q}r.
\end{equation*}
Since both matrices are positive definite, $a(q,r)>0$.  Since the left hand side is linear in $r$, we see $a=a(q)$, which proves \eqref{condfordominated}.
\end{proof}

Consider a forecaster motivated by a weighted scoring rule with baseline $q$ to issue a prediction $p(\theta)$ for $Y$.  She chooses her prediction based on some decision rule $p\{\breve\theta(r)\}$, where $r$ is the empirical distribution of the previous observations of $Y$.  For instance, $\breve\theta$ could be the optimal score estimator for her weighted scoring rule.  Her risk function is $-s[p\{\breve\theta(p^*)\}, p^*\parallel q]$, which depends on the unknown true distribution $p^*$ of $Y$.  Since $p^*$ is unknown it is approximated with the empirical distribution $r$.

Suppose the baseline is determined by the optimal score estimator of the compatible scoring rule, $q=p\{\tilde\theta(r)\}$.  Then, assuming $\breve\theta$ and $\tilde\theta$ to be well behaved at $r$, Lemma 1 implies that the forecaster's risk function is uniquely minimized when she issues the prediction $q$.  The optimal score estimator of the compatible scoring rule dominates any other estimator $\breve\theta$ for this choice of baseline.

\section{Examples}

Define the {quasi-Bregman} weighted scoring families to be the proper scoring rules with optimal expected scores
\begin{equation}\label{quasisep}
S\given{p}{q} = h\left\{\sum_{i=1}^m f(q_i) g\left(\frac{p_i}{q_i}\right) \right\}-g'(1)\sum_{i=1}^m\frac{p_if(q_i)}{q_i} h'\left\{ g(1)\sum_{j=1}^m f(q_j)\right\},
\end{equation}
where $g'$ denotes the derivative of $g$ with respect to its parameter, and similarly for $h'$.  We require that $f$ is positive, $g$ is twice differentiable and strictly convex, and that $h$ is twice differentiable and strictly increasing.  This defines a weighted scoring family for each choice of $f$, $g$ and $h$.  The expected score $S\given{p}{q}$ is strictly convex since $g$ is strictly convex, $f$ is positive and $h$ is increasing.  Hence the quasi-Bregman weighted scoring families are strictly proper.  The second term of \eqref{quasisep} ensures that $S\given{p}{q}$ has baseline $q$, though removing it achieve a simpler, equivalent rule for optimal score estimation.

The weighted power and pseudospherical scoring families of \citet{VictorRichmondR}, defined by
\begin{align*}
s^{\mathrm{pow}}\given{p,r}{q} &=\frac{1-\sum_{i=1}^m p_i^\beta q_i^{1-\beta}}{\beta} - \frac{1-\sum_{i=1}^m r_i p_i^{\beta-1}q_i^{1-\beta}}{\beta-1},\\
s^{\mathrm{ps}}\given{p,r}{q} &= \frac{1}{\beta-1}\left
\{\frac{\sum_{i=1}^m r_i p_i/q_i}{\left(\sum_{i=1}^m p_i^\beta q_i^{1-\beta}\right)^{1/\beta}}-1\right\}
\end{align*}
for $\beta>1$, are quasi-Bregman weighted scoring families with $f(x)=x$ and 
\begin{align*}
h^{\mathrm{pow}}(x)=\frac{x-1}{\beta(\beta-1)},\quad g^{\mathrm{pow}}(x)=x^\beta,\quad
h^{\mathrm{ps}}(x)=\frac{x^{1/\beta}-1}{\beta(\beta-1)},\quad g^{\mathrm{ps}}(x)=x^\beta.
\end{align*}
\citet{scoreMLE} proved that $\left.\nabla_\theta s\{p(\theta),r\} \right|_{\theta=\theta_0}=0$ implies $$\left.\nabla_\theta s\{p(\theta),r\parallel p(\theta_0\}\right|_{\theta=\theta_0}=0$$ when $s\given{p,r}{r}$ is a power or pseudospherical weighted scoring family and $s(p,r)$ is the log scoring rule.  From Lemma 1, this is equivalent to showing that the power and pseudospherical weighted scoring families are compatible with the log scoring rule.

\begin{corollary}\label{corr1}
The log scoring rule is compatible with any quasi-Bregman weighted scoring family with $f(x)=x$.  This holds for any twice differentiable and strictly convex $g$, and any twice differentiable and strictly increasing $h$.
\end{corollary}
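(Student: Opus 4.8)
The plan is to verify the compatibility condition \eqref{condfordominated} directly, by computing the two tangent vectors $\left.\nabla_p s(p,r)\right|_{p=q}$ and $\left.\nabla_p s\given{p,r}{q}\right|_{p=q}$ and exhibiting a positive scalar $a(q)$ relating them. Since predictions live in $\mathcal P$, only the components tangent to the simplex are relevant, and by the paper's convention these gradients satisfy $\ip{\cdot}{\one}=0$; hence it suffices to show the two vectors are parallel after projecting out the $\one$ direction. For the weighted family I would, as in \eqref{veqn}, rewrite its gradient as a Hessian acting on $r$, namely $\left.\nabla_p s\given{p,r}{q}\right|_{p=q}=\left.\nabla^2_p S\given{p}{q}\right|_{p=q}\,r$, so the whole problem reduces to comparing this Hessian with the log rule's, exactly in the spirit of \eqref{Shessian}.

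The log side is immediate from $s(p,r)=\sum_i r_i\log p_i$: differentiating gives $\left.\nabla_p s(p,r)\right|_{p=q}$ with $i$th entry $r_i/q_i$, i.e.\ $\mathrm{diag}(1/q_1,\dots,1/q_m)\,r$ up to the $\one$ direction. I would then compute the quasi-Bregman Hessian from \eqref{quasisep} with $f(x)=x$. Writing $u=\sum_i q_i g(p_i/q_i)$, the first term of \eqref{quasisep} has Hessian with $(i,j)$ entry $h''(u)\,g'(p_i/q_i)g'(p_j/q_j)+h'(u)\,\delta_{ij}g''(p_i/q_i)/q_i$, while the second term is linear in $p$ and contributes nothing. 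Evaluating at $p=q$, where each $p_i/q_i=1$ and $u=g(1)\sum_i q_i=g(1)$, yields
\begin{equation*}
\left.\nabla^2_p S\given{p}{q}\right|_{p=q}=h''\{g(1)\}\,g'(1)^2\,\one\one^T+h'\{g(1)\}\,g''(1)\,\mathrm{diag}(1/q_1,\dots,1/q_m).
\end{equation*}

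The decisive observation is that the first, rank-one term points along $\one$, so $\left.\nabla^2_p S\given{p}{q}\right|_{p=q}\,r = h''\{g(1)\}g'(1)^2\,\one+h'\{g(1)\}g''(1)\,\mathrm{diag}(1/q_i)\,r$, and the $\one$ contribution is annihilated on the tangent space of $\mathcal P$. What survives is $h'\{g(1)\}g''(1)\,\mathrm{diag}(1/q_i)\,r$, which is exactly the log rule's vector up to the scalar $h'\{g(1)\}g''(1)$. This is precisely why no assumption on $h$ beyond monotonicity is needed: the curvature of $h$ enters only through the $\one\one^T$ block, which is invisible to the simplex constraint. Thus \eqref{condfordominated} holds with $a(q)=1/[\,h'\{g(1)\}g''(1)\,]$, which is strictly positive because $h$ is increasing (so $h'\{g(1)\}>0$) and $g$ is strictly convex (so $g''(1)>0$); it is in fact constant in $q$.

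I expect the main obstacle to be bookkeeping the normalization cleanly, rather than the differentiation itself. One must argue rigorously that the $\one\one^T$ term is immaterial — either by working with the tangential gradients $\left.\nabla_p s\right|_{p=q}$ projected onto $\{x:\ip{x}{\one}=0\}$ as above, or by displaying the equivalence factor $\ip{b(q)}{r}$ that absorbs it — so that strict convexity of $g$ and monotonicity of $h$ emerge as the only structural hypotheses actually used. As a consistency check I would confirm that the power and pseudospherical cases $g(x)=x^\beta$ recover the known compatibility with the log rule quoted before the corollary.
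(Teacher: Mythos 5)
Your proposal is correct and takes essentially the same route as the paper: both verify \eqref{condfordominated} directly by differentiating the quasi-Bregman score with $f(x)=x$ at $p=q$ and comparing with the log rule's gradient, you merely organizing the computation through the Hessian of $S\given{p}{q}$ acting on $r$ as in \eqref{veqn} and explicitly projecting out the $\one$ direction, which the paper handles only implicitly through its convention $\ip{\nabla_p s}{\one}=0$. Note moreover that your constant $a(q)=1/[\,h'\{g(1)\}g''(1)\,]$ is the correct one---for the weighted power family, where $h'=1/\{\beta(\beta-1)\}$ and $g''(1)=\beta(\beta-1)$, it gives $a=1$, as direct differentiation of $s^{\mathrm{pow}}\given{p,r}{q}$ at $p=q$ confirms---whereas the paper's stated value $a=h'\{g(1)\}g'(1)$ (and its remark that $g$ enters only through its value and first derivative at $1$) is a slip, since the curvature $g''(1)$ necessarily appears in the scalar, with only the $h''\{g(1)\}g'(1)^2\,\one\one^T$ term being annihilated on the tangent space as you observe.
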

\begin{proof} 
By substituting $f(x)=x$ into \eqref{quasisep} and using \eqref{S2snew}, we obtain
\begin{equation}\label{quasibregman}
s\given{p,r}{q} = h'\left\{\sum_{i=1}^m q_i g\left(\frac{p_i}{q_i}\right) \right\}\sum_{j=1}^m r_j g'\left(\frac{p_j}{q_j}\right).
\end{equation}
The log scoring rule is $s(p,r)=\sum_{i=1}^m r_i\log p_i$.  Substituting \eqref{quasibregman} and the log scoring rule into \eqref{condfordominated} shows that the equality holds with $a=h'\left\{g(1)\right\}g'(1)$.  The functions $h$ and $g$ enter only through their values and first derivatives at 1.
\end{proof}

We define the {Bregman weighted scoring families} as the quasi-Bregman weighted scoring families with $h(x)=x$.  By substituting \eqref{quasisep} into \eqref{S2snew} and using equivalence, the Bregman weighted scoring families take the simple form
\begin{equation}\label{weightedBregman}
s\given{p,r}{q}=\sum_{i=1}^m f(q_i)\left\{g\left(\frac{p_i}{q_i}\right)+g'\left(\frac{p_i}{q_i}\right)\frac{r_i-p_i}{q_i}\right\}.
\end{equation}
We recover the unweighted Bregman scoring rules of \citet{entropy}, i.e.,
\begin{equation}\label{unweightedBregman}
s(p,r)=\sum_{i=1}^m \left\{\tilde g(p_i)+\tilde g'(p_i)(r_i-p_i)\right\},
\end{equation}
by using a flat baseline and rescaling $g$ to $\tilde g(p_i)=f(q_i)g(p_i/q_i)=f(m^{-1})g(mp_i)$.  The unweighted Bregman scoring rules are uniquely specified through the convex function $\tilde g$ alone.
\begin{corollary}
The unweighted Bregman rule specified by $\tilde g$ is compatible with all weighted Bregman families with $f(x)=x^2 \tilde g''(x)$.  This holds for any twice differentiable and strictly convex $g$.
\end{corollary}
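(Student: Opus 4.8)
The plan is to verify the compatibility condition \eqref{condfordominated} by direct computation, in the same spirit as the proof of Corollary \ref{corr1}: I would evaluate $\left.\nabla_p s(p,r)\right|_{p=q}$ for the unweighted Bregman rule \eqref{unweightedBregman} and $\left.\nabla_p s\given{p,r}{q}\right|_{p=q}$ for the weighted Bregman family \eqref{weightedBregman}, then exhibit the positive scalar $a(q)$ relating them.

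First I would differentiate \eqref{unweightedBregman}. The decisive simplification is that differentiating the correction term $\tilde g'(p_i)(r_i - p_i)$ in the $k$th coordinate yields $\tilde g''(p_k)(r_k - p_k) - \tilde g'(p_k)$, and the $-\tilde g'(p_k)$ cancels the $\tilde g'(p_k)$ produced by $\tilde g(p_k)$; the $k$th gradient component therefore collapses to $\tilde g''(p_k)(r_k - p_k)$, equal to $\tilde g''(q_k)(r_k - q_k)$ at $p = q$. The identical cancellation occurs in \eqref{weightedBregman}, where the $k$th component reduces to $f(q_k)\,g''(p_k/q_k)(r_k - p_k)/q_k^2$ and hence to $f(q_k)\,g''(1)(r_k - q_k)/q_k^2$ at $p = q$. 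I expect the bookkeeping behind these two cancellations to be the only step requiring care; the rest is mechanical.

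With both gradients computed, compatibility \eqref{condfordominated} reduces to whether $\tilde g''(q_k)(r_k - q_k)$ and $f(q_k)\,g''(1)(r_k - q_k)/q_k^2$ are proportional across $k$ by a single factor $a(q)$, for every $r$. Substituting $f(x) = x^2\tilde g''(x)$ makes the coordinatewise ratio equal to the constant $g''(1)$, independent of both $k$ and $r$, so the gradients coincide with $a(q) = 1/g''(1)$. Strict convexity of $g$ gives $g''(1) > 0$, hence $a(q) > 0$ as required, and strict convexity of $\tilde g$ (which makes the unweighted rule strictly proper) gives $f(x) = x^2\tilde g''(x) > 0$, so $f$ is an admissible quasi-Bregman weight. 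The genuine content, rather than an obstacle, is the recognition that $f(x) = x^2\tilde g''(x)$ is precisely the weight making the per-coordinate proportionality constant in $k$, which is exactly what a single scalar $a(q)$ demands.
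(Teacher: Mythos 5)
Your proposal is correct and follows exactly the route of the paper's proof, which simply states that compatibility is verified by substituting \eqref{weightedBregman} with $f(x)=x^2\tilde g''(x)$ and \eqref{unweightedBregman} into \eqref{condfordominated}; you have merely carried out the substitution explicitly, obtaining the gradients $\tilde g''(q_k)(r_k-q_k)$ and $f(q_k)g''(1)(r_k-q_k)/q_k^2$ and the scalar $a(q)=1/g''(1)$, which is the computation the paper leaves implicit. The only cosmetic caveat is that strict convexity plus twice differentiability does not literally force $g''(1)>0$ (consider $g(x)=(x-1)^4$), but the paper's standing assumption of strictly convex expected scores with positive definite Hessians makes this positivity implicit, so your argument matches the paper's level of rigour.
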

\begin{proof}
We use \eqref{condfordominated} with $s\given{p,r}{q}$ given by  \eqref{weightedBregman} with $f(x)=x^2 \tilde g''(x)$ and $s(p,r)$ given by \eqref{unweightedBregman}. 
\end{proof}

We illustrate the use of this corollary via an example.  The unweighted power scoring rule is defined by $\tilde g(x)=x^\beta/\{\beta(\beta-1)\}$ for $\beta>1$.  Using the above corollary with $f(x)=x^\beta$, we see that the unweighted power scoring rule is compatible with all weighted scoring families of the form
\begin{equation}
s\given{p,r}{q}=\sum_{i=1}^m q_i^\beta \left\{g\left(\frac{p_i}{q_i}\right)+g'\left(\frac{p_i}{q_i}\right)\frac{r_i-p_i}{q_i}\right\}
\end{equation}
for any choice of $g$.

\section{Discussion}

As an application of compatible proper scoring rules, consider a portfolio allocation problem similar to \citet{VictorRichmondR}.  There is a market consisting of $m$ assets, and a market maker who sets the prices at $q$.  After one time period asset $Y$ will be worth $1$ unit and the other assets will be worthless.  The investor purchases a portfolio, spending a proportion of his wealth $p_i(\theta)$ on each asset and thus receiving $p_i(\theta)/q_i(\theta)$ units of each asset.  He chooses $\theta$ based on the current prices $q$ and the historical outcome distribution $r$.  Suppose the investor's negative risk is given by a weighted scoring rule $s\{p(\theta),r\parallel q\}$.  The market maker does not know the form of the investor's scoring rule, but he believes it to come from a weighted scoring family compatible with some known proper scoring rule.  The market maker prices the assets using the compatible rule's optimal score estimator, $q=p\{\tilde\theta(r)\}$.  Then the market maker's price coincides with the investor's minimal risk portfolio $p(\theta)$: when the pricing is done by a compatible proper scoring rule, the investor is best served by buying the same number of units of each asset.  \citet{mlemotivation} interprets this minimal risk portfolio from an economic perspective, for the special case where the compatible rule is the log score.

Until now, the only weighted scoring families considered in the literature were the weighted power and pseudospherical scoring rules.  Since both are compatible with the log scoring rule, their optimal score estimators are dominated by the maximum likelihood estimator when the baseline is given by the latter.  \citet{scoreMLE} conjectured the existence of a characterization theorem for all weighted proper scoring families whose optimal score estimators are dominated in this way.  They went on to suggest that this theorem might reveal an unrecognized property of the log scoring rule.

We have found their conjectured characterization theorem: the optimal score estimator of any weighted proper scoring rule is dominated by the compatible proper scoring rule's optimal score estimator when the baseline is set to the compatible proper scoring rule's optimal score estimate.  However, instead of revealing a special property of the log scoring rule, we have shown that every proper scoring rule is compatible with some family of weighted proper scoring rules.

\section*{Acknowledgment}
I thank Steffen Lauritzen, Philip Dawid, Tilmann Gneiting and the referees for their helpful comments.


\end{document}